\DeclareRobustCommand{\tvdots}{%
  \vbox{\baselineskip4\p@\lineskiplimit\z@\kern0\p@\hbox{.}\hbox{.}\hbox{.}}}
\makeatletter\@addtoreset{chapter}{part}\makeatother%
\tikzset{
  symbol/.style={
    draw=none,
    every to/.append style={
      edge node={node [sloped, allow upside down, auto=false]{$#1$}}}
  }
}
\DeclareRobustCommand{\tvdots}{%
  \vbox{\baselineskip4\p@\lineskiplimit\z@\kern0\p@\hbox{.}\hbox{.}\hbox{.}}}
\theoremstyle{plain}
\newtheorem{thm}{Theorem}[section]
\newtheorem{prop}[thm]{Proposition}
\newtheorem{lem}[thm]{Lemma}
\theoremstyle{definition}
\newtheorem{defn}[thm]{Definition}
\newtheorem{ex}[thm]{Example}
\theoremstyle{remark}
\newtheorem*{rem}{Remark}
\newcommand{\map}[3]{#1\colon #2 \to #3}
\newcommand{\mb}{\mathbb}
\newcommand{\gam}{\gamma}
\newcommand{\del}{\delta}
\newcommand{\pr}{\prime}
\newcommand{\nCr}[2]{\left(\begin{matrix}#1\\#2\end{matrix}\right)}
\newcommand{\alf}{\alpha}
\newcommand{\mc}{\mathcal}
\newcommand{\im}[1]{\mbox{im}(#1)}
\title{The spectral sequence of a polycomplex}
\author{Andrew Phimister}
\date{} 
\begin{document}
\maketitle

\begin{abstract}
We give a very brief introduction to the machinery of spectral sequences, including the spectral sequence of a bicomplex. We then briefly introduce a generalisation of the spectral sequences of a bicomplex to the spectral sequences of abritrary higher complexes. 
\end{abstract}

\paragraph{\textbf{Note}} This preprint is a modified version of Chapter 5 of the author's PhD thesis~\cite{phimphd2025}, to which the reader is directed should they require additional information and application of this material.

\section{Introduction} 

Spectral sequences, in short, provide a powerful tool for calculating homology and cohomology in various circumstances, as well as providing insight into more purely algebraic topics. A spectral sequence can be thought of a sequence of approximations of (co)homology that, under favourable conditions, eventually give a precise calculation. They have been used in algebraic and topological contexts to great effect, such as with the Leray-Serre spectral sequence or the Eilenberg-Moore spectral sequence, information about which can be found in McCleary's textbook on the topic~\cite{McClearyJohn2001Augt}.

A rich source of nicely behaving spectral sequences is the spectral sequences associated to the filtrations on the total complex of a bicomplex. However, a natural generalisation of bicomplexes, which can be thought of as a 2 dimensional cochain complex, is a complex of arbitrary higher dimensions. These higher complexes, or \emph{polycomplexes}, also have a total complex and we can again look at filtrations on this total complex to obtain a spectral sequence. This has remained largely unexplored in the literature, and so we begin to rectify that here.   

We will work exclusively with $\Bbbk$-vector spaces. 

\section{Spectral Sequences}

Let $(V^*, d)$ be a cochain complex of vector spaces 
\[
\begin{tikzcd}
\cdots \ar{r} & V^{n-1} \ar{r}{d} & V^n \ar{r}{d} & V^{n+1} \ar{r}{d} & \cdots
\end{tikzcd}
\]
We will assume that $V^*$ is locally finite, that is, each $V^n$ is a finite dimensional vector space. We want to calculate the cohomology $H^*$ of this cochain complex, where
\[
	H^n(V^*) = \frac{\ker{(\map{d}{V^n}{V^{n+1}})}}{\im{\map{d}{V^{n-1}}{V^n}}}
\]
however in general this complex might have extra structure that makes calculations more complicated. If we are fortunate enough to have a ``graded'' complex where each $V^n = \bigoplus_\alf V^n_\alf$ and $d(V^n_\alf)\subset V^{n+1}_\alf$ then conveniently this induces a grading on the cohomology, and so we have
\[
	H^n(V^*) = H^n(\bigoplus_\alf V^*_\alf) \simeq \bigoplus_\alf H^n(V^*_\alf)
\]
However it's often the case that instead of a grading, we have a filtration $F$ on our cochain complex. So let $F$ be a decreasing filtraton on $V^*$
\[
\begin{tikzcd}
\{0\} \subset \cdots \subset F^{n}V^* \subset F^{n-1}V^* \subset \cdots \subset F^{0}V^* =  V^*
\end{tikzcd}
\]
such that $d(F^pV^n) \subset F^pV^{n+1}$. A cochain complex with such a filtration may be referred to as a \emph{filtered cochain complex} and be denoted $(V^*, F)$. We can visualise this scenario in the following way
\[
\begin{tikzcd}
\cdots \ar{r} &V^{n-1} \ar{r}{d} & V^n \ar{r}{d}& V^{n+1} \ar{r} & \cdots \\   
\cdots \ar{r} &\arrow[u,symbol=\subset] F^1V^{n-1} \arrow{r}{d}&\arrow[u,symbol=\subset] F^1V^n \arrow{r}{d}&\arrow[u,symbol=\subset]F^1V^{n+1} \arrow{r}& \cdots \\
{}&\tvdots\arrow[u,symbol=\subset] &\tvdots \arrow[u,symbol=\subset]&\tvdots\arrow[u,symbol=\subset]& {}\\
\cdots \ar{r} &\arrow[u,symbol=\subset] F^pV^{n-1} \arrow{r}{d}&\arrow[u,symbol=\subset] F^pV^n \arrow{r}{d}&\arrow[u,symbol=\subset]F^pV^{n+1} \arrow{r}& \cdots \\
{}&\tvdots\arrow[u,symbol=\subset] &\tvdots \arrow[u,symbol=\subset]&\tvdots\arrow[u,symbol=\subset]& {}
\end{tikzcd}
\]
where each row is a cochain complex. Just as with a graded cochain complex, the filtration on $V^*$ induces a filtration on the cohomology $H^*$
\[
\begin{tikzcd}
\cdots \subset F^{p+1}H^* \subset F^{p}H^* \subset F^{p-1}H^* \subset \cdots \subset F^0H^* = H^*
\end{tikzcd}
\]
and therefore we can arrange the cohomology in a similar grid 
\[
\begin{tikzcd}
\cdots  &H^{n-1}  & H^n & H^{n+1}  & \cdots \\   
\cdots  &\arrow[u,symbol=\subset] F^1H^{n-1} &\arrow[u,symbol=\subset] F^1H^n &\arrow[u,symbol=\subset]F^1H^{n+1} & \cdots \\
{}&\tvdots\arrow[u,symbol=\subset] &\tvdots \arrow[u,symbol=\subset]&\tvdots\arrow[u,symbol=\subset]& {}\\
\cdots &\arrow[u,symbol=\subset] F^pH^{n-1} &\arrow[u,symbol=\subset] F^pH^n &\arrow[u,symbol=\subset]F^pH^{n+1}& \cdots \\
{}&\tvdots\arrow[u,symbol=\subset] &\tvdots \arrow[u,symbol=\subset]&\tvdots\arrow[u,symbol=\subset]& {}
\end{tikzcd}
\]
Now, we can't expect the cohomology to split as a direct sum of the subspaces in the filtration, as it did with the grading. Any direct sum $\bigoplus F^pH^n$ would cause a severe over-representation of elements. 

We want to find some way of splitting the cohomology into a direct sum, so that we can simply calculate each part individually, and then collect our terms. We can do this in the following way, by defining 
\[
	E^{p, q}_0(H^*) = \frac{F^pH^{p+q}}{F^{p+1}H^{p+q}}
\]
where $F^{n+1}H^n = \{0\}$, then we see that 
\[
	H^n(V) \simeq \bigoplus_{p+q=n}E^{p, q}_0(H^*)
\]
Now the question is: If we apply these same ideas to our cochain complex $(V^*, d)$ and then calculate the cohomology, do we obtain the terms $E^{p, q}_0(H^*)$?

Unfortunately, we do not. However it does give us, in a very real sense, an \emph{approximation} of the cohomology of $(V^*, d)$, and is the first step in obtaining a precise calculation of $H^*$.

\begin{defn}
Consider our cochain complex $(V^*, d)$, and the filtration $F$ on $V^*$. Then the \emph{associated graded vector space} $E^{p, q}_0$ of $V^*$ is
\[
	E^{p, q}_0 = \frac{F^pV^{p+q}}{F^{p+1}V^{p+q}}
\]
The \emph{$E_0$-page} is the bigraded object $E^{*,*}_0$ made up of $E^{p,q}_0$ at each point $(p,q)$.
\end{defn}

Then, as before, we can see that
\[
	V^n \simeq \bigoplus_{p+q=n}E^{p,q}_0
\]
\begin{rem}
Note that this identification relies on $V^*$ being locally finite. For infinite dimensional vector spaces it is not true in general that for a subspace $U\subset V^*$ we have $(V^*/U)\oplus U \simeq V^*$. 

For example, take $\Bbbk[x]$ and consider the subspace $U=\mbox{Span}(x^n)$. Then $\Bbbk[x]/U$ is an $n$-dimensional vector space and $U$ is a $1$-dimensional subspace of $\Bbbk[x]$. Therefore $(\Bbbk[x]/U)\oplus U$ is an $(n+1)$-dimensional vector space and so cannot be isomorphic to $\Bbbk[x]$.

However it should be noted that spectral sequences can be defined in any abelian category, see~\cite{McClearyJohn2001Augt} or~\cite{weibel} for information on spectral sequences in abelian categories.
\end{rem}

Now, recall that the filtration induces a differential $\map{d}{F^pV^n}{F^pV^{n+1}}$ for each $p$. Therefore we have an induced differential 
\[
	\map{d_0}{E^{p,q}_0}{E^{p,q+1}_0}
\]
and so we are able to take the cohomology $H(E^{p,q}_0)$. We set
\[
	E^{p, q}_1 = H(E^{p,q}_0)
\]
and the bigraded object $E^{*, *}_1$ is called the \emph{$E_1$-page}. Then we have another induced differential
\[
	\map{d_1}{E^{p,q}_1}{E^{p+1,q}_1}.
\]
The details for how precisely this map is induced can be found in \cite[Proof of Theorem 2.6]{McClearyJohn2001Augt}. Then again we can take the cohomology and set 
\[
	E^{p, q}_2 =  H(E^{p,q}_1)
\]
Continuing in this way we obtain a sequence of bigraded objects $E^{*,*}_r$ for $r\ge 0$. Which leads us to our definition of a spectral sequence. 
\begin{defn}
A (first quadrant, cohomological) \emph{spectral sequence} is a sequence of bigraded vectors spaces $E^{*, *}_r$ for $r\ge 0$. So that for each $r$ and $p,q\ge 0$ we have a vector space $E^{p, q}_r$. Each $E^{*, *}_r$ is equipped with a differential $d_r$ of bidegree $(r, 1-r)$
\[
	\map{d_r}{E^{p, q}_r}{E^{p+r, q-r+1}_r}
\]
and for all $r\ge 1$ we have $E^{*, *}_{r+1}\simeq H(E^{*, *}_r)$, that is
\[
	E^{p, q}_{r+1} = \frac{\map{\ker{(d_r)}}{E^{p, q}_r}{E^{p+r, q-r+1}_r}}{\map{\im{d_r}}{E^{p-r, q+r-1}_r}{E^{p, q}_r}}
\]
We call the $r$-th stage of such an object its \emph{$E_r$-page}.
\end{defn}

The idea is that each page $E_r$ gives us a closer and closer approximation to the cohomology of our original cochain complex $(V^*, d)$, and so we hope that the limit of these pages will precisely calculate $H^*(V^*)$. We will see that in certain nice cases this can be guaranteed.

For first quadrant spectral sequences, the $E_r$-page can be visualised as a lattice in the first quadrant with each lattice point a vector space and the differentials as arrows, for example see below for $r = 2$
\begin{center}
 \begin{tikzpicture}[>=Stealth]
 \draw (0,0) -- (4.5,0);
\draw (0,0) -- (0,3.5);
\draw (4.4, -0.2) node {$p$}
(-0.2,3.4) node {$q$};
 \filldraw (0,0) circle [radius=2pt]
 (1,0) circle [radius=2pt]
 (2,0) circle [radius=2pt]
 (3,0) circle [radius=2pt]
 (4,0) circle [radius=2pt]
 (0,1) circle [radius=2pt]
 (1,1) circle [radius=2pt]
 (2,1) circle [radius=2pt]
 (3,1) circle [radius=2pt]
 (4,1) circle [radius=2pt]
 (0,2) circle [radius=2pt]
 (1,2) circle [radius=2pt]
 (2,2) circle [radius=2pt]
 (3,2) circle [radius=2pt]
 (4,2) circle [radius=2pt]
 (0,3) circle [radius=2pt]
 (1,3) circle [radius=2pt]
 (2,3) circle [radius=2pt]
 (3,3) circle [radius=2pt]
 (4,3) circle [radius=2pt];
\draw[->] (0, 3) -- (1.9, 2.05);
\draw[->] (1, 3) -- (2.9, 2.05);
\draw[->] (2, 3) -- (3.9, 2.05);
\draw[->] (1, 2) -- (2.9, 1.05);
\draw[->] (0, 2) -- (1.9, 1.05);
\draw[->] (2, 1) -- (3.9, 0.05);
\draw[->] (0, 1) -- (1.9, 0.05);
\draw[->] (2, 0) -- (3.9, -0.95);
 \end{tikzpicture}
\end{center}

Note that for $E^{p, q}_r$, when $r>\mbox{max}(p, q+1)$ then the differential $d_r$ is trivial, since $q+1-r<0$ and so $E^{p+r, q+1-r}_r = \{0\}$ and therefore $\ker{(d_r)} = E^{p, q}_r$. Similarly, since $p-r<0$ then $E^{p-r, q+r-1}_r = \{0\}$ and so $\im{d_r}=\{0\}$. Therefore
\[
	E^{p, q}_{r+1} = E^{p, q}_r
\]
and continuing in this way we have $E^{p, q}_{r+k} = E^{p, q}_r$ for $k\ge 0$. We denote this vector space by $E^{p, q}_\infty$.

\begin{defn}
If we have a spectral sequence $E^{*, *}_r$ such that $E^{p, q}_\infty \simeq E^{p, q}_0(H^*)$, then we say that it \emph{converges} to $H^*(V^*)$ and write $E^{p, q}_r \Rightarrow H^{p+q}(V^*)$.
\end{defn}

It is typical to put specific emphasis on the form of the $E_2$ page for a given spectral sequence. This is because the $E_2$ will often be the first interesting page of a spectral sequence. With this in mind, if we have information on the form that the components of the second page will take, then we will usually represent a convergent spectral sequence by
\[
	E_2^{p, q}\Rightarrow H^{p+q}(V^*)
\]
as a short hand for both describing $E_2$ and signifying that the spectral sequence converges.

We will now discuss the aforementioned nice cases that guarantee the convergence of our spectral sequence. In aid of this we will introduce some basic terminology about filtrations and spectral sequences. 

\begin{defn}
A decreasing filtration $F$ is \emph{bounded} if for each $n$ there are integers $s> t$ such that $F^sV^n = \{0\}$ and $F^tV^n = V^n$. If for each $n$ we have $F^{n+1}V^n = \{0\}$ and $F^0V^n = V^n$ then we say that the filtration is \emph{canonically bounded}.
\end{defn}

Clearly, canonically bounded is just a special case of bounded, where $s = n+1$ and $t= 0$ for each $n\ge 0$. 

\begin{defn}
Let $E^{*,*}_r$ be the $r$-th page of a spectral sequence of cohomological type, and for each $E^{p, q}_r$ call $p+q$ the \emph{total degree} of $E^{p, q}_r$. This spectral sequence is \emph{bounded} if there are only finitely many non-zero terms in each total degree of $E^{*,*}_r$. 
\end{defn}

Now we will state a criteria for convergence (there are others but for our purposes just this one will do).

\begin{thm}[Convergence Theorem]
\label{conv}
Suppose that the filtration $F$ on a cochain complex $(V^*, d)$ is bounded. Then the spectral sequence arising from $F$ is bounded and converges to $H^*(V^*)$.
\end{thm}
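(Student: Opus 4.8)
My plan is to establish three things in turn: that the spectral sequence is bounded, that each term stabilises so that $E^{p,q}_\infty$ is well defined, and finally that this stable term is the expected subquotient of $H^*(V^*)$. For boundedness, I would fix a total degree $n$. Since $F$ is bounded there are integers $s>t$ with $F^sV^n=\{0\}$ and $F^tV^n=V^n$, so the quotient $F^pV^n/F^{p+1}V^n$ can be non-zero only for the finitely many $p$ with $t\le p<s$; hence in each total degree $n=p+q$ there are only finitely many non-zero terms $E^{p,q}_0$. As every later page is by definition a subquotient of the preceding one, $E^{p,q}_0=\{0\}$ forces $E^{p,q}_r=\{0\}$ for all $r$, and the spectral sequence is bounded.

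Next I would prove stabilisation at each bidegree. For fixed $(p,q)$ the outgoing differential $\map{d_r}{E^{p,q}_r}{E^{p+r,q-r+1}_r}$ has target of total degree $p+q+1$ and first index $p+r$; since $F^{p+r}V^{p+q+1}=\{0\}$ once $r$ is large, that target vanishes and $d_r$ is zero there. Likewise the incoming differential has source $E^{p-r,q+r-1}_r$, and $F^{p-r}V^{p+q-1}=V^{p+q-1}$ for large $r$ makes $F^{p-r}V^{p+q-1}/F^{p-r+1}V^{p+q-1}=\{0\}$, so that source vanishes too. Thus for $r$ large enough both differentials meeting $(p,q)$ are trivial and $E^{p,q}_{r+1}=E^{p,q}_r$; I take this common value to be $E^{p,q}_\infty$.

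The heart of the argument is the identification of $E^{p,q}_\infty$, for which I would set up the standard cycle-and-boundary presentation of the pages. Writing
\[
Z^{p,q}_r=\{\, x\in F^pV^{p+q} : dx\in F^{p+r}V^{p+q+1}\,\},\qquad B^{p,q}_r=F^pV^{p+q}\cap d\!\left(F^{p-r}V^{p+q-1}\right),
\]
one checks by induction on $r$ — tracking how each $d_r$ is induced from $d$, as in \cite[Theorem 2.6]{McClearyJohn2001Augt} — that
\[
E^{p,q}_r\simeq\frac{Z^{p,q}_r}{B^{p,q}_{r-1}+Z^{p+1,q-1}_{r-1}}.
\]
Using the boundedness from the first two steps I would then pass to the limit: for large $r$ the condition $dx\in F^{p+r}V^{p+q+1}$ becomes $dx=0$, so $Z^{p,q}_r$ stabilises to $F^pV^{p+q}\cap\ker d$, while $F^{p-r}V^{p+q-1}=V^{p+q-1}$ makes $B^{p,q}_{r-1}$ stabilise to $F^pV^{p+q}\cap\im{d}$. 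This gives
\[
E^{p,q}_\infty\simeq\frac{F^pV^{p+q}\cap\ker d}{\left(F^pV^{p+q}\cap\im{d}\right)+\left(F^{p+1}V^{p+q}\cap\ker d\right)}.
\]
Finally, comparing with the filtration induced on $H^{p+q}$, whose term $F^pH^{p+q}$ is the image of $F^pV^{p+q}\cap\ker d$ in $H^{p+q}$ and is therefore isomorphic to $(F^pV^{p+q}\cap\ker d)/(F^pV^{p+q}\cap\im{d})$, a short diagram chase identifies the right-hand side above with $F^pH^{p+q}/F^{p+1}H^{p+q}=E^{p,q}_0(H^*)$, which is exactly convergence.

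I expect the main obstacle to be the inductive verification of the cycle-and-boundary formula for $E^{p,q}_r$: one must confirm that the differential $d_r$ really is the map induced by $d$ on these subquotients, and that the stated numerator and denominator are the correct kernel and image at each stage. Once that presentation is in hand, the boundedness secured in the first two steps makes the passage to the limit and the concluding diagram chase routine.
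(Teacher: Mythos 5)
Your proposal is correct and takes essentially the same route as the proof the paper relies on: the paper offers no argument of its own but defers to McCleary (p.~34), whose proof is exactly your three steps --- boundedness of the pages from boundedness of $F$, stabilisation of each $E^{p,q}_r$, and the cycle-and-boundary presentation $E^{p,q}_r\simeq Z^{p,q}_r/(B^{p,q}_{r-1}+Z^{p+1,q-1}_{r-1})$ passed to the limit and identified with $F^pH^{p+q}/F^{p+1}H^{p+q}$. The single step you defer, the inductive verification that $d_r$ is induced by $d$ on these subquotients, is precisely the content of the result you cite, so nothing is missing relative to what the paper itself provides.
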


The proof for this theorem can be found here~\cite[page 34]{McClearyJohn2001Augt}. And as we've noted, canonically bounded filtrations are examples of bounded filtrations, and therefore the spectral sequence arising from them converges to the cohomology of the cochain complex. This is important for the example we are about to explore in the next section.

\begin{prop}
\label{edge hom}
Suppose we have a canonically bounded filtration $F$ of $V^*$ that induces a first quadrant cohomological spectral sequence $E$. Then there are maps 
\begin{align*}
	& H^n(V^*) \to E^{0,n}_2\\
	& E^{n,0}_2\to H^n(V^*)
\end{align*}
for each $n\ge 0$. These maps are called the \emph{edge morphisms}.
\end{prop}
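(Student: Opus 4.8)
The plan is to exploit the fact that, along the two axes of a first quadrant spectral sequence, half of the relevant differentials vanish automatically for index reasons, so that the early pages are related to $E_\infty$ by a chain of surjections (on the bottom row) or injections (on the left column). I would then invoke the Convergence Theorem~\ref{conv} together with the canonical boundedness of $F$ to identify $E_\infty$ on each axis with the appropriate sub- or quotient-object of $H^n$, and read off the edge morphism as a composite. Note that canonical boundedness of $F$ on $V^*$ induces $F^{n+1}H^n = \{0\}$ and $F^0H^n = H^n$ on cohomology, which is exactly what is needed.

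First I would treat the bottom row, where $q=0$. For $r\ge 2$ the outgoing differential $d_r\colon E^{n,0}_r \to E^{n+r,1-r}_r$ has target with negative second index $1-r<0$, so $E^{n+r,1-r}_r = \{0\}$ and $d_r$ is zero there. Hence $\ker(d_r) = E^{n,0}_r$ on the bottom row, and the page recursion exhibits $E^{n,0}_{r+1} = E^{n,0}_r / \im{d_r}$ as a quotient of $E^{n,0}_r$. As observed before the definition of $E^{p,q}_\infty$, these stabilise after finitely many steps, so composing the quotient maps gives a surjection $E^{n,0}_2 \twoheadrightarrow E^{n,0}_\infty$. By Theorem~\ref{conv} the sequence converges, whence $E^{n,0}_\infty \simeq F^nH^n/F^{n+1}H^n = F^nH^n$, a subspace of $H^n$. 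The edge morphism $E^{n,0}_2 \to H^n(V^*)$ is then the composite $E^{n,0}_2 \twoheadrightarrow E^{n,0}_\infty \simeq F^nH^n \hookrightarrow H^n$.

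Dually I would treat the left column, where $p=0$. For $r\ge 2$ the incoming differential $d_r\colon E^{-r,n+r-1}_r \to E^{0,n}_r$ has source with negative first index $-r<0$, so $E^{-r,n+r-1}_r = \{0\}$ and $\im{d_r} = \{0\}$ there. Hence $E^{0,n}_{r+1} = \ker(d_r) \subseteq E^{0,n}_r$ is a subspace, and composing these inclusions yields an injection $E^{0,n}_\infty \hookrightarrow E^{0,n}_2$. Again by Theorem~\ref{conv} and canonical boundedness we identify $E^{0,n}_\infty \simeq F^0H^n/F^1H^n = H^n/F^1H^n$, which receives the natural quotient map from $H^n$. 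The edge morphism $H^n(V^*) \to E^{0,n}_2$ is the composite $H^n \twoheadrightarrow H^n/F^1H^n \simeq E^{0,n}_\infty \hookrightarrow E^{0,n}_2$.

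The index bookkeeping forcing the differentials to vanish is routine; the one place to take care is the application of the convergence isomorphism together with the boundary conditions $F^{n+1}H^n = \{0\}$ and $F^0H^n = H^n$, so that $E_\infty$ on each axis is identified with the correct extreme piece of the filtration on $H^n$ (namely $F^nH^n$ and $H^n/F^1H^n$) rather than some intermediate filtration step. Once those identifications are secured, both edge morphisms are exactly the indicated composites.
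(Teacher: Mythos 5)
Your proposal is correct and takes essentially the same route as the paper's own proof: both use first-quadrant vanishing to turn the page recursion into a chain of surjections $E^{n,0}_2\twoheadrightarrow E^{n,0}_\infty$ along the bottom row and a tower of inclusions $E^{0,n}_\infty\hookrightarrow E^{0,n}_2$ along the left column, then use canonical boundedness to identify $E^{n,0}_\infty\simeq F^nH^n(V^*)\subset H^n(V^*)$ and $E^{0,n}_\infty\simeq H^n(V^*)/F^1H^n(V^*)$, and obtain the edge morphisms as the evident composites. The only difference is harmless bookkeeping (you start the left-column inclusions at $r\ge 2$, while the incoming differential already vanishes for all $r\ge 1$), so the two arguments coincide in substance.
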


\begin{proof}
In general we have 
\[
	E^{p, q}_{r+1} = \frac{\map{\ker{(d_r)}}{E^{p, q}_r}{E^{p+r, q-r+1}_r}}{\map{\im{d_r}}{E^{p-r, q+r-1}_r}{E^{p, q}_r}}
\]
and for first quadrant spectral sequences we have $E_{r}^{p, q}=0$ for $p<0$ or $q<0$. So for each $n, r\ge 0$ we have
\[
	E^{0, n}_{r+1} = \frac{\map{\ker{(d_r)}}{E^{0, n}_r}{E^{r, n-r+1}_r}}{\map{\im{d_r}}{E^{-r, n+r-1}_r}{E^{0, n}_r}} \simeq \map{\ker{(d_r)}}{E^{0, n}_r}{E^{r, n-r+1}_r}\subseteq E_r^{0, n}.
\]
Since for first quadrant spectral sequences each $E^{p, q}_r$ stabilizes for sufficiently large $r$, that is $E^{p, q}_r\simeq E^{p, q}_{r+1}$, then we obtain a tower of inclusions
\[
	E^{0,n}_\infty\subseteq\cdots\subseteq E^{0,n}_2.
\]
Since this spectral sequence has arisen from a canonically bounded filtration $F$ then $H^n(V^*)= F^0H^n(V^*)$. However
\[
	E_\infty^{0, n} = \frac{F^0H^n(V^*)}{F^1H^n(V^*)}
\]
and therefore there is a surjection 
\[
	H^n(V^*) = F^0H^n(V^*) \to \frac{F^0H^n(V^*)}{F^1H^n(V^*)} = E_\infty^{0, n}.
\]
Therefore composing this surjection with the tower of inclusions above we get a map
\[
	H^n(V^*)\to E_2^{0, n}.
\]
Now since for $r\ge 2$ we have $1-r<0$, then $E_r^{n+r,1-r} = 0$ and therefore
\[
	E^{n, 0}_{r+1} = \frac{\map{\ker{(d_r)}}{E^{n, 0}_r}{E^{n+r, 1-r}_r}}{\map{\im{d_r}}{E^{n-r, r-1}_r}{E^{n, 0}_r}} \simeq E^{n,0}_r/{\map{\im{d_r}}{E^{n-r, r-1}_r}{E^{n, 0}_r}}.
\] 
Hence, there is a surjection $E_r^{n,0}\to E^{n, 0}_{r+1}$ and therefore we have a surjection $E^{n,0}_2\to E^{n,0}_\infty$. Now, since $F$ is canonically bounded then $F^{n+1}H^n(V^*) = 0$ and so 
\[
	E_\infty^{n, 0} = \frac{F^nH^n(V^*)}{F^{n+1}H^n(V^*)}\simeq F^nH^n(V^*) \subset H^n(V^*)
\]
and so composing the surjection $E^{n,0}_2\to E^{n,0}_\infty$ with the inclusion $E_\infty^{n, 0}\to H^n(V^*)$ we obtain a map 
\[
	E^{n,0}_2\to H^n(V^*)
\]
as required.
\end{proof}

\begin{defn}
Let $E$ and $E^\prime$ be two spectral sequences. Then a \emph{morphism of spectral sequences} $\map{f}{E}{E^\prime}$ is a sequence of bidgree $(0,0)$ homomorphisms $\map{f_r^{p,q}}{E_r^{p,q}}{{E^\prime}_r^{p,q}}$ such that $f$ commutes with the differentials of $E$ and $E^\prime$ and each $f_{r+1}$ is induced by $f_r$ on cohomology.
\end{defn}

\begin{prop}
\label{spec morph prop}
Consider two filtered cochain complexes $(V^*, F)$ and $(W^*, F^\pr)$. Let $\map{\phi}{V^*}{W^*}$ be a cochain map satisfying $\phi(F^pV^n)\subset {F^\pr}^pW^n$ for all $n$. Then $\phi$ induces a morphism of spectral sequences. 
\end{prop}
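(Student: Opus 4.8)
The plan is to use the explicit description of the pages as subquotients of the filtered complex, since in that description every object and every differential is built directly from the filtration $F$ and the differential $d$, both of which $\phi$ respects by hypothesis. Writing $n = p+q$, I would first introduce for $(V^*, d, F)$ the \emph{approximate cocycles} and \emph{approximate coboundaries}
\[
Z_r^{p, q} = \{x\in F^pV^n : dx \in F^{p+r}V^{n+1}\}, \qquad B_r^{p, q} = F^pV^n \cap d(F^{p-r}V^{n-1}),
\]
and recall, matching this against the inductive construction of the pages given above (cf. \cite[Proof of Theorem 2.6]{McClearyJohn2001Augt}), that
\[
E_r^{p, q} \simeq \frac{Z_r^{p, q}}{B_{r-1}^{p, q} + Z_{r-1}^{p+1, q-1}},
\]
where $\map{d_r}{E_r^{p,q}}{E_r^{p+r, q-r+1}}$ is the map $[x]\mapsto[dx]$ induced by $d$; a quick check confirms that $x\in Z_r^{p,q}$ forces $dx\in Z_r^{p+r, q-r+1}$, so this is well defined. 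I would set up the analogous spaces ${Z^\pr}_r^{p,q}$ and ${B^\pr}_r^{p,q}$ for $(W^*, d^\pr, F^\pr)$.

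The core of the argument is then purely formal. If $x\in Z_r^{p,q}$ then $\phi(x)\in{F^\pr}^pW^n$ because $\phi$ preserves the filtration, and $d^\pr\phi(x)=\phi(dx)\in{F^\pr}^{p+r}W^{n+1}$ because $\phi$ is a cochain map and $dx\in F^{p+r}V^{n+1}$; hence $\phi(x)\in{Z^\pr}_r^{p,q}$. The same two properties give $\phi(B_{r-1}^{p,q})\subset{B^\pr}_{r-1}^{p,q}$ and $\phi(Z_{r-1}^{p+1,q-1})\subset{Z^\pr}_{r-1}^{p+1,q-1}$, so $\phi$ respects both numerator and denominator of the subquotient and therefore descends, for every $r\ge0$, to a well-defined bidegree $(0,0)$ homomorphism
\[
	\map{f_r^{p,q}}{E_r^{p,q}}{{E^\pr}_r^{p,q}}.
\]
That each $f_r$ commutes with the differentials is then immediate: $f_r^{p+r,q-r+1}(d_r[x]) = [\phi(dx)] = [d^\pr\phi(x)] = {d^\pr}_r(f_r^{p,q}[x])$, using only that $d_r$ and ${d^\pr}_r$ are both induced by the ambient differentials, which $\phi$ intertwines.

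The one point requiring genuine care, and the step I expect to be the main obstacle, is the last clause in the definition of a morphism of spectral sequences: that $f_{r+1}$ be the map \emph{induced by $f_r$ on cohomology}, rather than merely some homomorphism on $E_{r+1}$. A priori $f_{r+1}$ has been defined as a descent of $\phi$ via the subquotient presentation, whereas the axiom asks it to coincide with $H(f_r)$ under the identification $E_{r+1}^{p,q}\simeq H(E_r^{p,q})$. The resolution is to observe that this identification is itself natural in the filtered complex: both $f_{r+1}$ and $H(f_r)$ are computed from the single cochain map $\phi$, and chasing a representative $x\in Z_{r+1}^{p,q}$ through the isomorphism $E_{r+1}^{p,q}\simeq\ker(d_r)/\im{d_r}$ shows that the class of $\phi(x)$ obtained either way agrees. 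Verifying this compatibility of $\phi$ with the identification of the $(r+1)$-st page with the $d_r$-cohomology of the $r$-th page is precisely the content that the inductive zigzag construction of the higher differentials hides, and it is where the proof must be worked carefully; once it is in hand, all the conditions for $\map{f}{E}{E^\pr}$ to be a morphism of spectral sequences are met.
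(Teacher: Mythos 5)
Your proof is correct and is essentially the same argument as the paper's: the paper does not prove this proposition inline but simply cites \cite[Theorem 3.5]{McClearyJohn2001Augt}, and your argument --- the subquotient presentation $E_r^{p,q} \simeq Z_r^{p,q}/(B_{r-1}^{p,q}+Z_{r-1}^{p+1,q-1})$ via approximate cocycles and coboundaries, the observation that $\phi$ preserves every ingredient because it intertwines the differentials and the filtrations, and the final check that the descended map $f_{r+1}$ agrees with $H(f_r)$ under the identification $E_{r+1}^{p,q}\simeq H(E_r^{p,q})$ --- is precisely the standard proof found at that citation. The one step you flag as delicate (compatibility of $\phi$ with the identification of pages) is indeed the substantive content there, and your sketch of the representative chase resolves it correctly.
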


\begin{proof}
See~\cite[Theorem 3.5]{McClearyJohn2001Augt}.
\end{proof}

\section{The Spectral Sequence of a Bicomplex}

\begin{defn}
A \emph{bicomplex} is a bigraded vector space $V^{**}$ with two maps $\map{d_v}{V^{p, q}}{V^{p, q+1}}$ and $\map{d_h}{V^{p, q}}{V^{p+1, q}}$ such that
\[
	d_h^2 = d_v^2 = d_vd_h + d_hd_v = 0
\]
so that $d_h$ and $d_v$ are differentials, and they anti-commute. 
\end{defn}
So we have the following picture
\[
\begin{tikzcd}
                          &\tvdots                                                                                        & \tvdots                                                                     & \tvdots                                                           & \\   
\cdots \arrow{r}& \arrow{r}{d_h} V^{p-1, q+1} \arrow{u}                                         & \arrow{r}{d_h} V^{p, q+1} \arrow{u}       & \arrow{r} V^{p+1, q+1}  \arrow{u}           &  \cdots \\
\cdots \arrow{r}& \arrow{r}{d_h} V^{p-1, q} \arrow{u}{d_v}     &\arrow{r}[swap]{d_h} V^{p, q} \arrow{u}[swap]{d_v}   &  \arrow{r} V^{p+1, q} \arrow{u}[swap]{d_v}       & \cdots  \\
\cdots \arrow{r}& \arrow{r}[swap]{d_h} V^{p-1, q-1}\arrow{u}{d_v}            & \arrow{r}[swap]{d_h} V^{p, q-1} \arrow{u}{d_v}          &\arrow{r} V^{p+1, q-1} \arrow{u}[swap]{d_v}     & \cdots \\
                          &\tvdots \arrow{u}                                                                        &\tvdots \arrow{u}                                                     & \tvdots  \arrow{u}                                          & 
\end{tikzcd}
\]

We will only consider first quadrant bicomplexes, so that $p, q\ge 0$.

\begin{defn}
Let $V^{**}$ be a first quadrant bicomplex. Then the \emph{total complex} $\mc{T}(V^{**})$ of $V^{**}$ is the cochain complex defined by
\[
	\mc{T}(V^{**})^n = \bigoplus_{p+q=n}V^{p,q}
\]
and differential $d = d_h+d_v$.
\end{defn}

So then $\mc{T}(V^{**})$ is the cochain complex
\[
\begin{tikzcd}
V^{0,0} \ar{r}{d} & V^{1,0}\oplus V^{0,1} \ar{r}{d} & V^{2,0}\oplus V^{1,1}\oplus V^{0,2} \ar{r}{d} &  \cdots
\end{tikzcd}
\]
We can define two filtrations on $\mc{T}(V^{**})$, one coming from the columns of $V^{**}$ and the other from the rows. 	

Let $'F$ be the filtration on $\mc{T}(V^{**})$ we obtain by deleting the first $s$ columns of $V^{**}$ and then finding the total complex, that is $'F^s\mc{T}(V^{**})$ is the total complex of
\[
\begin{tikzcd}
               & \tvdots                           &   \tvdots                      & \tvdots                           &     \\
   0  \ar{r}   &  V^{s,2} \ar{u}\ar{r}    &  V^{s+1, 2}\ar{u} \ar{r}    & V^{s+2, 2}\ar{u}\ar{r}     & \cdots  \\
    0  \ar{r}   &  V^{s,1} \ar{u}\ar{r}    &  V^{s+1, 1}\ar{u}\ar{r}    & V^{s+2,1} \ar{u}\ar{r}     & \cdots  \\
    0  \ar{r}   &  V^{s,0} \ar{u}\ar{r}    &  V^{s+1, 0}\ar{u} \ar{r}    & V^{s+2,0} \ar{u}\ar{r}     & \cdots
\end{tikzcd}
\]
and therefore 
\[
	'F^s\mc{T}(V^{**})^n = \bigoplus_{p+q = n, \, p\ge s} V^{p, q}
\]
We will call $'F$ the \emph{vertical filtration}. We can see that $'F^{n+1}\mc{T}(V^{**})^n = 0$ and $'F^0\mc{T}(V^{**})^n = \mc{T}(V^{**})^n$. Therefore $'F$ is a canonically bounded filtration of $\mc{T}(V^{**})$, and so by Theorem~\ref{conv} the spectral sequence $'E^{*,*}$ arising from this filtration is bounded and converges to $H^*(\mc{T}(V^{**}))$.

Similarly, the filtration $''F$ arising from the rows, which we will call the \emph{horizontal filtration}, given by
\[
	 ''F^t\mc{T}(V^{**})^n = \bigoplus_{p+q = n, \, q\ge t} V^{p, q}
\]
is canonically bounded, and so the spectral sequence $''E^{*,*}$ also converges to $H^*(\mc{T}(V^{**}))$.

Note that 
\[
	'E^{p, q}_0 \simeq V^{p ,q}, \quad ''E^{p, q}_0 \simeq V^{q ,p}
\]
and therefore we can identify $'E_1^{p, q} \simeq H^q(V^{p,*})$ and $''E_1^{p, q} \simeq H^q(V^{*,p})$. That is, $'E_1^{p, q}$ is the vertical cohomology of $V^{**}$ and $''E_1^{p, q}$ is the horizontal cohomology of $V^{**}$.
\begin{prop}
Let $'E^{*,*}$ and $''E^{*,*}$ be the spectral sequences arising from the filtrations $'F$ and $''F$ respectively. Then 
\[
	'E^{p, q}_2 \simeq H_h^p(H_v^q(V^{**}))
\]
and
\[
	''E^{p,q}_2 \simeq H_v^p(H_h^q(V^{**}))
\]
where $H_v$ and $H_h$ is the vertical and horizontal cohomology of the bicomplex $V^{**}$ respectively.  
\end{prop}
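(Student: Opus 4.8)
The plan is to exploit the evident symmetry between the two filtrations: transposing the bicomplex (exchanging the roles of $p$ and $q$, and of $d_h$ and $d_v$) interchanges $'F$ with $''F$ and hence $'E$ with $''E$, so I would prove the first isomorphism in full and deduce the second by this symmetry. I therefore concentrate on the vertical filtration $'F$. The excerpt has already established that $'E_0^{p,q}\simeq V^{p,q}$ and that, since the total differential $d=d_h+d_v$ reduces modulo $'F^{p+1}$ to $d_v$, we have $'E_1^{p,q}\simeq H_v^q(V^{p,*})$. The entire content of the proposition then rests on identifying the differential $d_1\colon{}'E_1^{p,q}\to{}'E_1^{p+1,q}$ as the map on vertical cohomology induced by $d_h$.

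To this end I would unwind the general construction of $d_1$ recalled in~\cite[Proof of Theorem 2.6]{McClearyJohn2001Augt}. A class in $'E_1^{p,q}$ is represented by an element $v\in V^{p,q}$ which is a vertical cocycle, $d_v v=0$. Regarding $v$ as an element of $'F^p\mc{T}(V^{**})^{p+q}$ and applying the total differential gives $dv=d_h v+d_v v=d_h v$, an element of $V^{p+1,q}\subset{}'F^{p+1}\mc{T}(V^{**})^{p+q+1}$; by the recipe for $d_1$ this represents the class $d_1[v]\in{}'E_1^{p+1,q}$. Thus, at the level of representatives, $d_1$ is nothing more than \enquote{apply $d_h$}.

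Next I would verify the two facts that make this description well defined, both flowing directly from the anticommutativity relation $d_hd_v+d_vd_h=0$. First, $d_h v$ is again a vertical cocycle, because $d_v(d_h v)=-d_h(d_v v)=0$, so it legitimately represents a class in $'E_1^{p+1,q}\simeq H_v^q(V^{p+1,*})$. Second, replacing $v$ by $v+d_v w$ changes $d_h v$ by $d_h d_v w=-d_v(d_h w)$, a vertical coboundary, so the class $[d_h v]$ is independent of the chosen representative. Together these show that $d_1=(d_h)_*$, the homomorphism induced by $d_h$ on the vertical cohomology groups.

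Finally, by the defining property of the spectral sequence, $'E_2^{p,q}=H({}'E_1^{p,q},d_1)=\ker(d_1)/\im{d_1}$. Since $d_1$ is exactly the horizontal differential acting on the bigraded object $H_v^q(V^{**})$, this quotient is by definition the horizontal cohomology of the vertical cohomology, namely $H_h^p(H_v^q(V^{**}))$, which is the claimed isomorphism; the statement for $''E$ follows by the symmetric argument with $d_h$ and $d_v$ interchanged. I expect the main obstacle to be precisely the bookkeeping in the second paragraph: showing that the \emph{abstractly} defined differential $d_1$ of the spectral sequence really does coincide with the naive map induced by $d_h$, which requires faithfully tracking a representative through the lift, apply-$d$, and project steps of the $d_1$ construction and checking the outcome is independent of all choices made. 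The anticommutativity of $d_h$ and $d_v$ is the key structural input that makes every one of these steps go through.
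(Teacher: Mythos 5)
Your proof is correct, and it follows exactly the route the paper itself sets up: the paper states this proposition \emph{without proof}, having already computed $'E_0^{p,q}\simeq V^{p,q}$ and $'E_1^{p,q}\simeq H^q(V^{p,*})$ in the preceding text and deferring the construction of $d_1$ to McCleary, so your identification $d_1=(d_h)_*$ on vertical cohomology (with the second isomorphism obtained by transposing the bicomplex) is precisely the standard completion of that outline. One small refinement for full rigour, which your final paragraph already anticipates: a general class in $'E_1^{p,q}$ is represented by an element $x\in{}'F^p\mc{T}(V^{**})^{p+q}$ whose component $u\in V^{p+1,q-1}$ need not vanish, so the leading term of $dx$ in $V^{p+1,q}$ is $d_hv+d_vu$ rather than $d_hv$ alone — but $d_vu$ is a vertical coboundary, so the class $[d_hv]$ is unaffected and your description of $d_1$ stands.
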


\begin{ex}[Proof of the Snake Lemma]
This proof of the snake lemma using spectral sequences is due to Vakil~\cite{snakelemm}. Consider the following diagram 
\[
\begin{tikzcd}
0 \ar{r} & D \ar{r}                     & E \ar{r}                        & F \ar{r}                       & 0 \\
0 \ar{r} & A \ar{u}{\alf} \ar{r} & B \ar{u}{\beta} \ar{r} & C \ar{u}{\gam} \ar{r} & 0
\end{tikzcd}
\]
with exact rows and commuting squares. We want to show that there is an exact sequence 
\[
\begin{tikzcd}[cramped, sep=small]
0 \ar{r} & \ker{(\alf)} \ar{r} & \ker{(\beta)} \ar{r} & \ker{(\gam)} \ar{r} & \mbox{coker}(\alf) \ar{r} & \mbox{coker}(\beta) \ar{r} & \mbox{coker}(\gam) \ar{r} & 0
\end{tikzcd}
\]
Consider the horizontal filtration $''F$. The $''E_0$-page is given by   
\[
\begin{tikzcd}[cramped, sep=small]
0  & 0       \\
F \ar{u} & C \ar{u}\\
E \ar{u} & B \ar{u} \\
D \ar{u} & A \ar{u} \\
0 \ar{u}& 0 \ar{u}
\end{tikzcd}
\]
Since the rows of the original diagram are exact then $''E_1^{p, q}= 0$ for all $(p, q)$, and therefore $''E_\infty^{p, q} = 0$.

Now consider the vertical filtration $'F$. Since $''E^{p, q}_r \Rightarrow 0$ then $'E_r^{p,q} \Rightarrow 0$ as well. The $'E_0$-page is given by 
\[
\begin{tikzcd}
0  & D                   & E                         & F                       & 0 \\
0 & A \ar{u}{\alf} & B \ar{u}{\beta} & C \ar{u}{\gam} & 0
\end{tikzcd}
\]
and therefore the $'E_1$-page is the cohomology of the $'E_0$-page, with induced differentials of bidegree $(r, 1-r) = (1, 0)$, and is given by
\[
\begin{tikzcd}
0 \ar{r}  & \mbox{coker}(\alf) \ar{r} & \mbox{coker}(\beta)\ar{r} & \mbox{coker}(\gam)\ar{r} & 0 \\
0 \ar{r}  & \ker{(\alf)} \ar{r}             & \ker{(\beta)} \ar{r}             & \ker{(\gam)} \ar{r}            & 0
\end{tikzcd}
\]
Now, the $'E_2$-page has differentials of bidegree $(r, 1-r) = (2, -1)$, and so looks like the following diagram
\[
\begin{tikzcd}
0 \ar{drr} & 0 \ar{drr} & 0   \ar{drr} &                  &                   &      & \\
0 \ar{drr} & 0 \ar{drr} &?? \ar{drr} & ? \ar{drr} & ?   \ar{drr} & 0     & \\
                 & 0              & ?  \ar{drr} & ? \ar{drr} & ?? \ar{drr} & 0    & 0 \\
                 &                 &                    &                 & 0                 & 0  & 0
\end{tikzcd}
\]
Now, since the points marked with a single question mark $?$ all have $0$ maps as differentials then the cohomology stabilizes, and are therefore isomorphic to their respective points in the $'E_\infty$-page. But we've already shown that the $'E_\infty$-page is $0$, and so the single question marks are all $0$. This means that the sequences in the $'E_1$-page are exact everywhere except at $\mbox{coker}(\alf)$ and $\ker{(\gam)}$.

The cohomology of the double question marks $??$ stabilizes in the $'E_3$-page, and therefore 
\[
\begin{tikzcd}
0\ar{r} & ?? \ar{r} & ??\ar{r} & 0
\end{tikzcd}
\]
is an exact sequence, which implies that they are isomorphic to one another. The left hand $??$ is $\ker{(\mbox{coker}(\alf)\to\mbox{coker}(\beta))}$ and the right hand $??$ is $\mbox{coker}(\ker{(\beta)}\to\ker{(\gam)})$, that is
\[
	\ker{(\mbox{coker}(\alf)\to\mbox{coker}(\beta))} \simeq \mbox{coker}(\ker{(\beta)}\to\ker{(\gam)})
\]
Combining all this information we obtain an exact sequence 
\[
\begin{tikzcd}[cramped, sep=small]
0 \ar{r} & \ker{(\alf)} \ar{r} & \ker{(\beta)} \ar{r} & \ker{(\gam)} \ar{r}{\del} & \mbox{coker}(\alf) \ar{r} & \mbox{coker}(\beta) \ar{r} & \mbox{coker}(\gam) \ar{r} & 0
\end{tikzcd}
\]
where $\del$ is the canonical projection from $\ker{(\gam)}$ to $\mbox{coker}(\ker{(\beta)}\to\ker{(\gam)})$. 
\end{ex}

\section{The Spectral Sequence of a Polycomplex}
\label{new spec seq}
The spectral sequence of a bicomplex is a classical and typical construction found in most texts that introduce spectral sequences (see~\cite{ca},~\cite{McClearyJohn2001Augt} or~\cite{weibel} as representative examples), however we will also require the spectral sequences that arise from a tricomplex, which has not been explored in the literature nearly as deeply (if at all).  Here we introduce a new generalisation of the construction of the spectral sequence of a bicomplex, and show how to construct a spectral sequence for any higher complex.  

A graded vector space $V$ indexed by elements of $\mb{Z}^k$ will be referred to as \emph{$k$-graded}, that is, we can represent $V$ as a direct sum
\[
	V = \bigoplus_{x_i\in\mb{Z}} V^{(x_1,\dots ,x_k)}
\]
We will focus on $k$-graded vector spaces such that each grading is non-negative, that is, each $x_i\ge 0$.  A $k$-graded linear map $\map{f}{V}{W}$ is said to have \emph{polydegree $(d_1, \dots, d_k)$} where each $d_i\in\mb{Z}$ if 
\[
	f(V^{(x_1, \dots, x_k)}) \subseteq W^{(x_1 + d_1, \dots, x_k+ d_k)}
\]
for every $(x_1, \dots, x_k)\in\mb{Z}^k$.

\begin{defn}
A \emph{polycomplex of dimension $k$} (or just $k$-complex) for $k\ge1$ is a $k$-graded vector space $V$ equipped with $k$ linear maps $\map{\partial_i}{V}{V}$ each with polydegree $d_i = 1$ and $d_j = 0$ for $j\ne i$, such that
\[
	\partial^2_i = 0
\]
for each $i\in\{1,\dots, k\}$, and 
\[
	\partial_i\partial_j + \partial_j\partial_i = 0
\]
for each $i\ne j$.
\end{defn}

A $1$-dimensional polycomplex ($1$-complex) is just a cochain complex. A polycomplex of dimension $2$ ($2$-complex) is simply a bicomplex, and in this case we will denote $\partial_h := \partial_1$ and $\partial_v := \partial_2$
\[
\begin{tikzcd}
\tvdots                                                                                        & \tvdots                                                                     & \tvdots                                                           & \\   
\arrow{r}{\partial_h} V^{(0, 2)} \arrow{u}                                         & \arrow{r}{\partial_h} V^{(1, 2)} \arrow{u}       & \arrow{r} V^{(2, 2)}  \arrow{u}           &  \cdots \\
 \arrow{r}{\partial_h} V^{(0, 1)} \arrow{u}{\partial_v}     &\arrow{r}[swap]{\partial_h} V^{(1, 1)} \arrow{u}[swap]{\partial_v}   &  \arrow{r} V^{(2, 1)} \arrow{u}[swap]{\partial_v}       & \cdots  \\
 \arrow{r}[swap]{\partial_h} V^{(0, 0)}\arrow{u}{\partial_v}            & \arrow{r}[swap]{\partial_h} V^{(1, 0)} \arrow{u}{\partial_v}          &\arrow{r} V^{(2, 0)} \arrow{u}[swap]{\partial_v}     & \cdots
\end{tikzcd}
\]
A polycomplex of dimension $3$ (or $3$-complex) will be referred to as a \emph{tricomplex} and is a $3$-dimensional grid 
\[
\begin{tikzcd}[cramped, sep=small]
\ddots &\tvdots & \ddots& \tvdots &\ddots&\tvdots & \\
& V^{(0,1,2)} \ar{rr} \ar{u}\ar{ul}     &                                  \tvdots                                  & V^{(1,1,2)} \ar{rr}\ar{ul}   \ar{u} \ar[from=dd]         &                            \tvdots                                      & V^{(2,1,2)} \ar{u} \ar{r} \ar[from=dd]   \ar{ul} & \mathrlap{\cdots}\tvdots &\\
\ddots&                                      & V^{(0,0,2)} \ar{ul} \ar[crossing over]{rr}   \ar{u}  &                              & V^{(1,0,2)} \ar{ul} \ar[crossing over]{rr} \ar{u}                                      &                                   & V^{(2,0,2)}  \ar{ul}  \ar{u}   \ar{r} & \cdots    \\
&V^{(0,1,1)} \ar{rr} \ar{uu} \ar{ul} &                                                                                  & V^{(1,1,1)} \ar[from = dd] \ar{rr}            &                                    & V^{(2,1,1)} \ar{r}\ar[from=dd]           &       \cdots                                                     \\
\ddots &                                     & V^{(0,0,1)} \ar[crossing over]{rr} \ar{ul} \ar[crossing over]{uu} &                                               & V^{(1,0,1)} \ar[crossing over]{rr} \ar{ul} \ar[crossing over]{uu} &                                   & V^{(2,0,1)} \ar{ul} \ar{uu}    \ar{r} & \cdots          \\
&V^{(0,1,0)} \ar{uu} \ar{rr}\ar{ul} &                                                                                                           & V^{(1,1,0)} \ar{rr} &                                                                                  & V^{(2,1,0)} \ar{r}&                                                       \cdots     \\
 &                                             & V^{(0,0,0)} \ar{ul}{\partial_2} \ar[crossing over, near start]{uu}{\partial_3} \ar[swap]{rr}{\partial_1} &                                               & V^{(1,0,0)} \ar{ul} \ar[crossing over]{uu} \ar{rr} &       & V^{(2,0,0)} \ar{ul} \ar{uu}\ar{r} & \cdots \\
\end{tikzcd}
\]

\begin{defn}
Let $V$ be a $k$-complex. Then the \emph{total complex of $V$} is the complex $\mc{T}(V)$ given by
\[
	\mc{T}(V)^n = \bigoplus_{\sum_{i=1}^k x_i = n}V^{(x_1, \dots, x_k)}
\]
and the differential $\map{d}{\mc{T}(V)}{\mc{T}(V)}$ is given by $d = \sum_{i=1}^n\partial_i$. If $k= 1$ then we set $\mc{T}(V) = V$ and $d = \partial_1$. 
\end{defn}

Giving a precise formula for the differential is difficult in general, due to a reliance on the ordering of the components. For a $k$-complex $V$, the $n$-th component of $\mc{T}(V)$ is made up of the direct sum of all the components $V^{(x_1, \dots, x_k)}$ such that
\[
	\sum_{i=1}^k x_i = n
\]
where $x_i\ge 0$. A basic combinatorial argument then tells us that there are $\nCr{n+k-1}{k-1}$ solutions to this sum, and therefore $\nCr{n+k-1}{k-1}$ summands making up each $\mc{T}(V)^n$. Therefore the differential $\map{d^n}{\mc{T}(V)^n}{\mc{T}(V)^{n+1}}$ can be represented by an $\nCr{n+k}{k-1}\times\nCr{n+k-1}{k-1}$ matrix (when considering elements of $\mc{T}(V)^n$ as a column vector). The precise construction of this matrix depends on the ordering of the summands in $\mc{T}(V)^n$ and $\mc{T}(V)^{n+1}$. However, each of these matrices can be transformed into one another through a finite series of column and row swap operations. A row swap operation corresponds to a change in the order of summands of $\mc{T}(V)^{n+1}$, and a column swap is a change in the ordering of $\mc{T}(V)^n$. Therefore all these matrices represent the same map $d$. 

Consider a tricomplex $V$ as an example. Then the first 3 terms of the total complex $\mc{T}(V)$ are given by
\begin{align*}
\mbox{T}(V)^0 =& V^{(0,0,0)} \\ 
\mbox{T}(V)^1 =& V^{(1,0,0)}\oplus V^{(0,1,0)}\oplus V^{(0,0,1)}\\
\mbox{T}(V)^2 =& V^{(2,0,0)}\oplus V^{(0,2,0)}\oplus V^{(0,0,2)} \oplus V^{(0,1,1)} \oplus V^{(1,0,1)}\oplus V^{(1,1,0)}.
\end{align*}
Therefore, given these orderings, the first two differentials are
\begin{align*}
	d^0 &= \left(\begin{matrix} \partial_1 \\ \partial_2 \\ \partial_3 \end{matrix}\right) \\
	d^1 &= \left(\begin{matrix} \partial_1 & 0 & 0 \\ 0 & \partial_2 & 0 \\ 0 & 0 & \partial_3 \\ 0 & \partial_3 & \partial_2 \\ \partial_3 & 0 & \partial_1 \\ \partial_2 & \partial_1 & 0 \end{matrix}\right)
\end{align*}
The differential is any possible linear combination of the differentials, but which row/column is where depends on the ordering of summands. The composition of the first two differentials is given by
\[
	d^1d^0 = \left(\begin{matrix} \partial_1^2 \\ \partial_2^2 \\ \partial_3^2 \\ \partial_3\partial_2 + \partial_2\partial_3 \\ \partial_1\partial_3 + \partial_3\partial_1 \\ \partial_2\partial_1 + \partial_1\partial_2 \end{matrix} \right) 
\]
which is $0$, by the properties of the differentials of $M$. Similarly, the properties of the differentials $\partial_i$ guarantee that each $d^{n+1}d^n = 0$ for all $n\ge 0$. 

When considering a bicomplex $V$, we easily obtain two filtrations on the total complex by restricting the size of each index, that is 
\[
	F_1^t\mc{T}(V)^n = \bigoplus_{\substack{p+q = n,\\ p\ge t}} V^{(p,q)}
\]
and 
\[
	F_2^t\mc{T}(V)^n = \bigoplus_{\substack{p+q = n,\\ q\ge t}} V^{(p,q)}
\]
Then clearly we have
\[
	F_i^0\mc{T}(V)^n = \mc{T}^n(V)
\]
and when $t>n$ then $F_i^t\mc{T}(V)^n =0$. Hence, these two filtrations are canonically bounded and give rise to bounded convergent spectral sequences of $\mc{T}(V)$.  

These filtrations can be easily generalised to an arbitrary $k$ dimensional polycomplex, one for each restriction of an appropriate combination of indices, such that each one is canonically bounded. For example, for a tricomplex $V$, indexed by $(x_1,x_2,x_3)$, there are $6$ canonically bounded filtrations, associated with the following conditions:
\[
	x_1\ge s, \quad x_2\ge s, \quad x_3\ge s, \quad x_1+x_2\ge s, \quad x_1+x_3 \ge s, \quad x_2+x_3\ge s
\]
This is equivalent to choosing $1$ index out of $3$ and $2$ indices out of $3$, so in general there are
\[
	\sum_{i=1}^{k-1}\nCr{k}{i} =  \sum_{i=0}^k\nCr{k}{i} - \nCr{k}{0} - \nCr{k}{k} = 2^k - 2
\]
canonically bounded filtrations associated with a $k$-complex $V$, thus we have the following result.

\begin{thm}
\label{spec seqs of k comp}
For every $k$-complex $V$ there are at least $2^k - 2$ convergent spectral sequences calculating $H^*(\mc{T}(V))$. 
\end{thm}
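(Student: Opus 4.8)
The plan is to exhibit, for each nonempty proper subset $S\subsetneq\{1,\dots,k\}$, a canonically bounded filtration of the total complex $\mc{T}(V)$, and then to invoke the Convergence Theorem (Theorem~\ref{conv}) so that each such filtration yields a convergent spectral sequence calculating $H^*(\mc{T}(V))$. Since there are precisely $2^k-2$ nonempty proper subsets of $\{1,\dots,k\}$ — this being exactly the combinatorial identity recorded immediately before the statement — this produces the required number of spectral sequences.

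First I would define, for each such $S$, the subspace
\[
F_S^s\mc{T}(V)^n = \bigoplus_{\substack{x_1+\cdots+x_k=n,\\ \sum_{i\in S}x_i\ge s}} V^{(x_1,\dots,x_k)},
\]
which specialises, when $k=2$, to the vertical and horizontal filtrations $F_1$ and $F_2$ upon taking $S=\{1\}$ and $S=\{2\}$. That $F_S$ is a decreasing filtration is immediate, since raising $s$ only tightens the defining inequality $\sum_{i\in S}x_i\ge s$.

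The substantive checks are compatibility with the differential and canonical boundedness. For compatibility, recall that $d=\sum_{i=1}^k\partial_i$ and that each $\partial_i$ raises the index $x_i$ by one while fixing the others; hence applying any $\partial_i$ either increases $\sum_{i\in S}x_i$ by one (when $i\in S$) or leaves it unchanged (when $i\notin S$), so in either case the $S$-degree remains $\ge s$ and we obtain $d\bigl(F_S^s\mc{T}(V)^n\bigr)\subseteq F_S^s\mc{T}(V)^{n+1}$. For canonical boundedness, observe that every summand of $\mc{T}(V)^n$ satisfies $0\le\sum_{i\in S}x_i\le\sum_{i=1}^k x_i=n$: the lower bound gives $F_S^0\mc{T}(V)^n=\mc{T}(V)^n$, while the upper bound gives $F_S^{n+1}\mc{T}(V)^n=0$, since no summand can have $S$-degree exceeding $n$. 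Thus each $F_S$ is canonically bounded, and Theorem~\ref{conv} applies to each, giving a bounded convergent spectral sequence.

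I expect the only real point of care — rather than a genuine obstacle — to be the bookkeeping: verifying the $d$-compatibility uniformly across all the $\partial_i$ simultaneously, and confirming that distinct subsets $S$ genuinely give distinct filtrations (which they do, since $F_S$ records precisely the partial degree $\sum_{i\in S}x_i$), so that the count of $2^k-2$ is honest. The phrase ``at least'' in the statement then comfortably covers both the possibility of further filtrations not arising from this combinatorial recipe and the finer question, not needed here, of whether the resulting spectral sequences are pairwise non-isomorphic.
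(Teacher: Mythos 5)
Your proposal is correct and follows essentially the same route as the paper, which establishes the theorem via the preceding discussion: one canonically bounded filtration $F_S$ per nonempty proper subset $S$ of the indices, the count $\sum_{i=1}^{k-1}\nCr{k}{i} = 2^k-2$, and an appeal to Theorem~\ref{conv}. Your write-up is in fact slightly more careful than the paper's, since you explicitly verify $d$-compatibility of $F_S$ (each $\partial_i$ preserves or raises the $S$-degree) and the two bounds $F_S^0\mc{T}(V)^n=\mc{T}(V)^n$ and $F_S^{n+1}\mc{T}(V)^n=0$, which the paper asserts without proof.
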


\begin{lem}
\label{polycomp spec E1}
Let $V$ be a $k$-complex for $k>1$. Fix a subset $A\subset \{x_1,\dots, x_k\}$, and let $F$ be the filtration of $\mc{T}(V)$ given by
\[
	F^s\mbox{T}(V) = \bigoplus_{\substack{\sum x_j = n,\\ \sum_{x_i\in A} x_i\ge s}}V^{(x_1, \dots, x_k)}
\]
and $E$ the spectral sequence associated with $F$. Then
\[
	{E^{p,q}_1} = H^q(\mc{T}(V[\textstyle \sum_{x_i\in A} x_i=p]))
\]
where $V[\sum_{x_i\in A} x_i=p]$ is the polycomplex of degree $k-1$ obtained by fixing $\sum_{x_i\in A} x_i = p$ in $V$. 
\end{lem}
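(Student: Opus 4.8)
The plan is to unwind the definitions of the $E_0$- and $E_1$-pages directly and to read off the first differential explicitly. First I would describe the associated graded object. By construction $F^s\mc{T}(V)^n$ is the direct sum of those summands $V^{(x_1,\dots,x_k)}$ with $\sum_j x_j = n$ and $\sum_{x_i\in A}x_i\ge s$, while $F^{s+1}\mc{T}(V)^n$ is the sub-direct-sum obtained by replacing ``$\ge s$'' with ``$\ge s+1$''. The quotient therefore picks out exactly the summands with $\sum_{x_i\in A}x_i$ equal to $s$, giving
\[
E_0^{p,q}=\frac{F^p\mc{T}(V)^{p+q}}{F^{p+1}\mc{T}(V)^{p+q}}\cong\bigoplus_{\substack{\sum_{x_i\in A}x_i=p,\\ \sum_{x_j\notin A}x_j=q}}V^{(x_1,\dots,x_k)},
\]
the complementary indices summing to $q=(p+q)-p$. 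Unlike the global splitting $V^n\cong\bigoplus E_0^{p,q}$, this identification is canonical and needs no local finiteness hypothesis, since $F$ is a filtration by direct summands.

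Next I would identify the induced differential $d_0$. The total differential is $d=\sum_{i=1}^k\partial_i$, and each $\partial_i$ raises $x_i$ by one while fixing the remaining indices. Hence for $x_i\in A$ the map $\partial_i$ strictly increases $\sum_{x_i\in A}x_i$, so it carries $F^p$ into $F^{p+1}$ and induces the zero map on $E_0=F^p/F^{p+1}$; whereas for $x_j\notin A$ the map $\partial_j$ preserves $\sum_{x_i\in A}x_i$ and so descends to the quotient. Therefore $d_0=\sum_{x_j\notin A}\partial_j$, a map of total degree $+1$ that fixes $p$ and raises $q$ by one, exactly as a $d_0$ should.

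It then remains to recognise the cochain complex $(E_0^{p,\bullet},d_0)$ as a total complex. Imposing $\sum_{x_i\in A}x_i=p$ cuts out of $V$ a polycomplex graded by the remaining free indices, carrying precisely the differentials $\partial_j$ for $x_j\notin A$; the directions internal to $A$ retain the zero differential, since no single $\partial_i$ preserves the constraint. This is the object $V[\sum_{x_i\in A}x_i=p]$, and the relations $\partial_j^2=0$ and $\partial_j\partial_{j'}+\partial_{j'}\partial_j=0$ it inherits from $V$ confirm it is a polycomplex with $d_0^2=0$. Reading off its total complex reproduces the direct sum displayed above, with total degree equal to $\sum_{x_j\notin A}x_j$, and its total differential is again $\sum_{x_j\notin A}\partial_j=d_0$. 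Passing to cohomology gives $E_1^{p,q}=H^q(E_0^{p,\bullet},d_0)=H^q(\mc{T}(V[\sum_{x_i\in A}x_i=p]))$, as required.

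The step I expect to be the main obstacle is the bookkeeping in this last identification. One must pin down the grading convention on $V[\sum_{x_i\in A}x_i=p]$ so that its \emph{total} degree tracks $\sum_{x_j\notin A}x_j=q$ (and not the sum over all of its index directions), and then check that, under the resulting matching of summands, the surviving differential $d_0$ agrees with the total differential of the reduced complex on the nose, signs included. Making the passage from ``fix a linear constraint on $k$ indices'' to ``a polycomplex of dimension $k-1$'' precise — in particular specifying exactly which gradings are retained and with which differentials — is where all the care is needed; the vanishing and anticommutation relations, by contrast, transfer verbatim from $V$.
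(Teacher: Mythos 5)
Your argument is essentially the paper's own proof: identify the associated graded $E_0^{p,q}$ as the direct sum of the components $V^{(x_1,\dots,x_k)}$ with $\sum_{x_i\in A}x_i=p$ and $\sum_{x_j\notin A}x_j=q$, observe that the induced differential is $d_0=\sum_{x_j\notin A}\partial_j$, i.e.\ the total differential of $\mc{T}(V[\sum_{x_i\in A}x_i=p])$, and take cohomology. If anything you are more careful than the paper, which simply asserts the form of $d_0$ where you justify it (the $\partial_i$ with $x_i\in A$ raise the filtration degree and so vanish on the quotient) and which glosses over the grading convention you rightly flag, namely that the total degree of $V[\sum_{x_i\in A}x_i=p]$ must count only $\sum_{x_j\notin A}x_j$.
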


\begin{proof}
Observe that
\[
	{E^{p, q}_0} = \frac{\displaystyle\bigoplus_{\substack{\sum x_j = p+q, \\\sum_{x_i\in A} x_i\ge p}} V^{(x_1,\dots, x_k)}}{\displaystyle\bigoplus_{\substack{\sum x_j = p+q,\\ \sum_{x_i\in A} x_i\ge p+1}} V^{(x_1,\dots, x_k)}} \simeq \bigoplus_{\substack{\sum_{j\ne i}x_j = q,\\ \sum_{x_i\in A} x_i=p}}V^{(x_1,\dots, x_k)}
\]
Then the vertical differential $\map{d_0}{{E^{p, q}_0}}{{E^{p, q+1}_0}}$ is simply $\sum_{j\ne i}\partial_j$, which is the differential of $\mc{T}(V[\sum_{x_i\in A} x_i=p])$. Therefore each column ${E^{p,*}_0}$ of the ${E_0}$-page is $\mc{T}(V[\sum_{x_i\in A} x_i=p])$. Then we obtain our result by taking cohomology.
\end{proof}

It will be beneficial to clarify the lemma above by considering the example of a tricomplex. So let $V$ be a tricomplex indexed by $V^{(x_1,x_2,x_3)}$ and consider the filtration $F$ given by 
\[
	F^s\mc{T}(V)^n = \bigoplus_{\substack{x_1+x_2+x_3 = n,\\ x_1\ge s}}V^{(x_1,x_2,x_3)}
\]
Then the column $E^{0,\ast}_0$ for example is the total complex of $V[ x_1=0]$, which in this case is the bicomplex 
\[
\begin{tikzcd}
\tvdots                                                                                        & \tvdots                                                                     & \tvdots                                                           & \\   
\arrow{r} V^{(0,0, 2)} \arrow{u}                                         & \arrow{r} V^{(0,1, 2)} \arrow{u}       & \arrow{r} V^{(0,2, 2)}  \arrow{u}           &  \cdots \\
 \arrow{r} V^{(0,0, 1)} \arrow{u}     &\arrow{r} V^{(0,1, 1)} \arrow{u}   &  \arrow{r} V^{(0,2, 1)} \arrow{u}       & \cdots  \\
 \arrow{r}[swap]{\partial_2} V^{(0, 0,0)}\arrow{u}{\partial_3}            & \arrow{r} V^{(0,1, 0)} \arrow{u}          &\arrow{r} V^{(0,2, 0)} \arrow{u}     & \cdots
\end{tikzcd}
\]
or in other words it is the total complex of the bicomplex sitting at the $x_1=0$ position of $V$. Similarly $E^{1,\ast}_0$ is the total complex of the bicomplex sitting at the $x_1=1$ position of $V$ and so on. 

Note that the $\partial_1$ differential doesn't appear in $V[x_1=0]$. This is true for all $V[x_1=p]$, and therefore the spectral sequence $E$ arising from $F$ is the spectral sequence that ``forgets'' the $\partial_1$ differential. 

Now consider the filtration $F$ given by
\[
	F^s\mc{T}(V)^n =  \bigoplus_{\substack{x_1+x_2+x_3 = n,\\ x_1+x_2\ge s}}V^{(x_1,x_2,x_3)}
\]
Then $V[x_1+x_2=1]$ for example is the bicomplex  
\[
\begin{tikzcd}
\tvdots                                                          & \tvdots                          \\   
 V^{(0,1, 2)} \arrow{u}                                & V^{(1,0, 2)} \arrow{u}        \\
 V^{(0,1, 1)} \arrow{u}                               &V^{(1,0, 1)} \arrow{u}    \\
  V^{(0, 1,0)}\arrow{u}{\partial_3}            &V^{(1,0, 0)} \arrow{u}{\partial_3}          
\end{tikzcd}
\]
which is the bicomplex in $V$ that intersects with the $x_1+x_2=1$ plane, and we see that the differentials $\partial_1$ and $\partial_2$ have been forgotten. And so for this filtration $E^{p,q}_0$ is $\mc{T}(V[x_1+x_2=p])^q$ and is given by
\[
	\mc{T}(V[x_1+x_2=p])^q = \bigoplus_{\substack{x_3 = q,\\ x_1+x_2= p}}V^{(x_1,x_2,x_3)}
\]
Summarising these examples in a more general setting, we can say that when applying the filtration $F$ coming from the condition 
\[
	\sum x_i \ge s
\]
for some subset of the indices $\{x_1,\dots, x_k\}$, then for any $k$-complex $V$ the column $E_0^{p,\ast}$ is the total complex of $(k-1)$-complex that intersects with the plane $\sum x_i = p$. We also observe that the differentials $\partial_i$ are all forgotten, and therefore each of the $2^k-2$ spectral sequences of $V$ are the spectral sequences that arise from all the different possible combinations of forgetting the differentials and their respective $(k-1)$-complexes. 

\begin{prop}
\label{spec seq morphs prop}
Let $V$ be a $k$-complex where $k\ge 3$ and consider the filtration $F$ on $V$ coming from the condition
\[
	\sum_{r=1}^m x_{i_r}\ge t
\]
and the filtration $F^\prime$ coming from the condition
\[
	\sum_{r=1}^n x_{j_r}\ge t
\]
such that $n>m$ and $\{x_{i_1},\dots, x_{i_m}\}\subset\{x_{j_1},\dots,x_{j_n}\}$, then there is a morphism of spectral sequences $\map{f}{E}{E^\prime}$ where $E$ and $E^\prime$ are the spectral sequences arising from $F$ and $F^\prime$ respectively.
\end{prop}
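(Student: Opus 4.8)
The strategy is to realize the desired $f$ as the morphism induced by a single cochain map via Proposition~\ref{spec morph prop}. Both $F$ and $F^\prime$ are filtrations of the \emph{same} filtered object, the total complex $\mc{T}(V)$ with differential $d = \sum_i \partial_i$, so the obvious candidate for the inducing map is the identity $\map{\id{\mc{T}(V)}}{\mc{T}(V)}{\mc{T}(V)}$. This is automatically a cochain map, so by Proposition~\ref{spec morph prop} the only thing I would need to verify is the compatibility condition
\[
	\id{\mc{T}(V)}\bigl(F^t\mc{T}(V)^N\bigr) \subseteq {F^\prime}^t\mc{T}(V)^N \quad\text{for all } t, N,
\]
which amounts to the inclusion of filtration pieces $F^t\mc{T}(V)^N \subseteq {F^\prime}^t\mc{T}(V)^N$.

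To establish this inclusion I would compare the defining conditions summand by summand. A summand $V^{(x_1,\dots,x_k)}$ with $\sum x_j = N$ lies in $F^t\mc{T}(V)^N$ precisely when $\sum_{r=1}^m x_{i_r}\ge t$, and in ${F^\prime}^t\mc{T}(V)^N$ precisely when $\sum_{r=1}^n x_{j_r}\ge t$. Since $\{x_{i_1},\dots,x_{i_m}\}\subset\{x_{j_1},\dots,x_{j_n}\}$ and every index is non-negative, for each multidegree we have
\[
	\sum_{r=1}^m x_{i_r} \le \sum_{r=1}^n x_{j_r}.
\]
Hence $\sum_{r=1}^m x_{i_r}\ge t$ forces $\sum_{r=1}^n x_{j_r}\ge t$, so every summand of $F^t\mc{T}(V)^N$ is a summand of ${F^\prime}^t\mc{T}(V)^N$, giving the inclusion. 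Proposition~\ref{spec morph prop} applied to $\id{\mc{T}(V)}$ then immediately produces the morphism $\map{f}{E}{E^\prime}$.

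I do not expect a genuine obstacle here; the entire content is the partial-sum inequality above, which relies essentially on the non-negativity of the gradings assumed throughout. The one point requiring care is the \emph{direction} of the inclusion: because the filtrations are decreasing and cut out by ``$\ge t$'' conditions, the filtration built from the smaller index set $\{x_{i_r}\}$ is the finer one, so the induced morphism genuinely runs from $E$ (the $m$-fold filtration) to $E^\prime$ (the $n$-fold filtration), matching the statement rather than its reverse. It is worth remarking that, since the underlying cochain map is the identity, $f$ is induced at the $E_0$-level by the natural quotient maps between the respective associated graded pieces, so no further choices enter the construction.
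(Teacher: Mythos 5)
Your proposal is correct and follows essentially the same route as the paper: establish the inclusion $F^t\mc{T}(V)^n \subseteq {F^\prime}^t\mc{T}(V)^n$ from the non-negativity of the gradings, then apply Proposition~\ref{spec morph prop} to the identity (viewed as the inclusion of filtered complexes) to induce $\map{f}{E}{E^\prime}$. Your summand-by-summand inequality $\sum_{r=1}^m x_{i_r} \le \sum_{r=1}^n x_{j_r}$ is in fact a slightly cleaner presentation of the same comparison the paper makes.
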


\begin{proof}
Since $\{x_{i_1},\dots, x_{i_m}\}\subset\{x_{j_1},\dots,x_{j_n}\}$ then
\[
	\sum_{r=1}^n x_{j_r} = \sum_{r=1}^m x_{i_r} + \sum x_{j_r}
\]
where $\sum x_{j_r}$ is the sum of the indices $x_{j_r}\in \{x_{j_1},\dots,x_{j_n}\}\setminus\{x_{i_1},\dots, x_{i_m}\}$ and therefore 
\[
	\sum_{r=1}^m x_{i_r}\ge t \ge t - \sum x_{j_r}
\]
So if indices $(x_1,\dots, x_k)$ satisfy $\sum_{r=1}^m x_{i_r}\ge t$ then they also satisfy $$\sum_{r=1}^n x_{j_r}\ge t$$ And therefore $F^t\mc{T}(V)^n\subset {F^\prime}^t\mc{T}(V)^n$.

Now, consider the inclusion $\map{i}{(T(V), F)}{(T(V), F^\prime)}$ as a morphism of filtered cochain complexes, then by Proposition~\ref{spec morph prop} this induces a morphism of spectral sequences $\map{f}{E}{E^\prime}$.
\end{proof}

\bibliographystyle{abbrv}
\bibliography{spec_seq_of_polycomp_ref}
\end{document}